\documentclass{article}
\usepackage[cp1251]{inputenc}
\usepackage[russian]{babel}
\usepackage{amssymb,amsfonts,amsmath,amsthm,amscd}
\usepackage{graphicx}
\usepackage{enumerate}
\usepackage{soul}
\usepackage[matrix,arrow,curve]{xy}
\usepackage{longtable}
\usepackage{array}
\usepackage{bm}

\emergencystretch=0em
\newdimen\symskip
\newdimen\defskip
\defskip=3pt
\newdimen\parind
\parind=\parindent
\newdimen\leftmarge
\newdimen\theoremshape
\theoremshape=11pt
\clubpenalty10000
\widowpenalty10000
\topsep\defskip
\righthyphenmin=2

\makeatletter
\newcommand*{\clei}{\nobreak\hskip\z@skip}

\renewcommand{\"}{''}
\renewcommand{\:}{\textup{:}}
\renewcommand{\~}{\textup{;}}
\DeclareRobustCommand*{\ti}{~\textemdash{} }
\DeclareRobustCommand*{\dh}{\clei\hbox{-}\clei}
\newcommand{\no}{}
\renewcommand{\@listI}{\settowidth\labelwidth{\labheadi{\no}}\listipar{\parind}{\labelwidth}}
\newcommand{\listivpar}{\topsep\defskip\partopsep0pt\parsep-\parskip\itemsep0.5\topsep}
\newcommand{\listipar}[2]{\rightmargin0pt\leftmargin#1\labelsep#1\advance\labelsep-#2\itemindent0pt\listivpar}
\renewcommand{\@listii}{\settowidth\labelwidth{\labheadii{\@roman{\no}}}\listiipar{\parind}{\labelwidth}}
\newcommand{\listiivpar}{\topsep0.5\defskip\partopsep0pt\parsep-\parskip\itemsep0.5\topsep}
\newcommand{\listiipar}[2]{\rightmargin0pt\leftmargin#1\labelsep#1\advance\labelsep-#2\itemindent0pt\listiivpar}
\def\thempfn{\ifcase\value{footnote}1\or *\or **\or ***\else\@ctrerr\fi}
\renewcommand\footnoterule{%
  \kern-3\p@
  \hrule\@width1in
  \kern2.6\p@}
\makeatother

\newcommand{\nwl}{\newline}

\makeatletter

\renewcommand{\@biblabel}[1]{[#1]}
\renewenvironment{thebibliography}[1]
     {\renewcommand{\refname}{References}%
      \section*{\refname}%
      \@mkboth{\MakeUppercase\refname}{\MakeUppercase\refname}%
      \list{\@biblabel{\@arabic\c@enumiv}}%
           {\itemsep\baselineskip
            \leftmargin\parind
            \settowidth\labelwidth{\@biblabel{#1}}%
            \labelsep\parind\advance\labelsep-\labelwidth
            \@openbib@code
            \usecounter{enumiv}%
            \let\p@enumiv\@empty
            \renewcommand\theenumiv{\@arabic\c@enumiv}}%
      \sloppy
      \clubpenalty4000
      \@clubpenalty\clubpenalty
      \widowpenalty4000%
      \sfcode`\.\@m}
     {\def\@noitemerr
       {\@latex@warning{Empty `thebibliography' environment}}%
      \endlist}

\def\@maketitle{%
  \newpage
  \vskip0.5em%
  UDK \udk%
  \vskip0.5em%
  MSC \msc%
  \vskip1em%
  \begin{center}\bf%
  \let\footnote\thanks%
   {\Large\@author\par}%
   \vskip1.5em%
   {\LARGE\@title\par}%
   \vskip1em%
   {\large\@date}%
  \end{center}%
  \par
  \vskip1.5em}

\def\@title{\@latex@warning@no@line{No \noexpand\title given}}

\makeatother

\sloppy
\settowidth{\symskip}{Z}
\textwidth=65\symskip
\setlength{\textheight}{40\baselineskip}
\setlength{\textheight}{\baselinestretch\textheight}
\addtolength{\textheight}{\topskip}
\oddsidemargin=4mm
\evensidemargin=4mm
\topmargin=5mm
\mathsurround=0pt

\makeatletter

\renewcommand\sectionmark[1]{%
 \markright{%
  \ifnum \c@secnumdepth >\z@
   \thesection. \ %
  \fi
 #1}}%

\renewcommand{\section}{\@startsection{section}{1}{0pt}%
{5.5ex plus .5ex minus .2ex}{1.5ex plus .3ex}%
{\center\normalfont\Large\bfseries\sffamily\bom}}
\renewcommand{\subsection}{\@startsection{subsection}{2}{0pt}%
{4.5ex plus .4ex minus .2ex}{0.75ex plus .2ex}%
{\center\normalfont\large\bfseries\sffamily\bom}}
\renewcommand{\subsubsection}{\@startsection{subsubsection}{3}{0pt}%
{2.5ex plus .5ex minus .2ex}{1ex plus .2ex}%
{\center\normalfont\bfseries\sffamily\bom}}

\newcommand{\Ss}{\textup{\S\,}}

\def\@postskip@{\hskip.5em\relax}

\def\postsection{.\@postskip@}

\def\postsubsection{.\@postskip@}

\def\postsubsubsection{.\@postskip@}

\def\postparagraph{.\@postskip@}

\def\postsubparagraph{.\@postskip@}

\def\@seccntformat#1{\csname pre#1\endcsname\csname the#1\endcsname\csname post#1\endcsname}

\makeatother

\renewcommand{\thesection}{\textup{\arabic{section}}}

\newcommand{\parr}{\par\addvspace{\defskip}}
\newcommand{\theo}[2]{\newtheorem{#1}{#2}[section]}
\newcommand{\deff}[2]{\newenvironment{#1}{\parr\textbf{#2.}}{\parr}}
\theo{cas}{Case}
\theo{problem}{Problem}
\theo{theorem}{Theorem}
\theo{lemma}{Lemma}
\theo{prop}{Proposition}
\theo{stm}{Statement}
\theo{imp}{Corollary}
\theo{ex}{Example}
\deff{df}{Definition}
\deff{note}{Remark}
\deff{denote}{Notation}
\deff{denotes}{Notations}
\deff{hint}{Hint}
\deff{answer}{Answer\:}

\makeatletter
\def\@begintheorem#1#2[#3]{%
  \deferred@thm@head{\the\thm@headfont \thm@indent
    \@ifempty{#1}{\let\thmname\@gobble}{\let\thmname\@iden}%
    \@ifempty{#2}{\let\thmnumber\@gobble}{\let\thmnumber\@iden}%
    \@ifempty{#3}{\let\thmnote\@gobble}{\let\thmnote\@iden}%
    \thm@notefont{\bfseries\upshape}%
    \indent%
    \thm@swap\swappedhead\thmhead{#1}{#2}{#3}%
    \the\thm@headpunct
    \thmheadnl 
    \hskip\thm@headsep
  }%
  \ignorespaces}
\renewenvironment{proof}{\setcounter{cas}{0}\parr\pushQED{\qed}\normalfont$\square\quad$}{\setcounter{cas}{0}\popQED\@endpefalse\parr}

\makeatother


\newcommand{\labheadi}[1]{\textup{#1)}}
\newcommand{\labheadii}[1]{\textup{(#1)}}

\newenvironment{nums}[1]{\renewcommand{\no}{#1}\begin{enumerate}}{\end{enumerate}}
\newcommand{\eqn}[1]{\begin{equation}#1\end{equation}}
\newcommand{\equ}[1]{\begin{equation*}#1\end{equation*}}

\newcommand{\case}[1]{\begin{cases}#1\end{cases}}
\newcommand{\cask}[1]{\begin{casks}#1\end{casks}}

\newcommand{\rbmat}[1]{\begin{pmatrix}#1\end{pmatrix}}

\makeatletter

\def\LT@makecaption#1#2#3{%
  \LT@mcol\LT@cols c{\hbox to\z@{\hss\parbox[t]\LTcapwidth{%
    \sbox\@tempboxa{#1{#2. }#3}%
    \ifdim\wd\@tempboxa>\hsize
      #1{#2. }#3%
    \else
      \hbox to\hsize{\hfil\box\@tempboxa\hfil}%
    \fi
    \endgraf\vskip\baselineskip}%
  \hss}}}

\@addtoreset{equation}{section}
\@addtoreset{footnote}{section}
\@addtoreset{footnote}{page}

\newenvironment{casks}{%
  \matrix@check\casks\env@casks
}{%
  \endarray\right.%
}
\def\env@casks{%
  \let\@ifnextchar\new@ifnextchar
  \left\lbrack
  \def\arraystretch{1.2}%
  \array{@{}l@{\quad}l@{}}%
}

\newcommand{\high}{\vph{\Br{}_0^0}}
\newcounter{numt}
\newcounter{col}
\newcounter{coll}
\newcommand{\mt}[3]{\multicolumn{#1}{#2}{#3}}
\renewcommand{\thenumt}{\textup{\arabic{numt})}}
\newcommand{\news}{\\\hline}
\newcommand{\refs}{\refstepcounter{numt}}
\newcommand{\an}[1]{\refs\label{#1}\thenumt&}
\newcommand{\n}[1]{\news\an{#1}}
\newcommand{\refcol}[1]{\addtocounter{col}{#1}}
\newcommand{\fmc}{\news\mt{1}{|>{\nwl}p{0.5cm}<{\nwl}|}{$\high$ №}}
\newcommand{\mc}[3]{&\mt{#1}{#2@{\refcol{#1}}|}{#3}}
\newcommand{\nc}[2]{\mc{1}{#1}{#2}}
\newcommand{\mtc}[1]{\mt{\value{coll}}{c}{#1}}

\newcommand{\lontu}[4]{%
\setcounter{col}{1}\setcounter{numt}{0}
\begin{longtable}{|>{\nwl}p{0.5cm}<{\nwl}|#1}\caption*{#2}\fmc#3
\setcounter{coll}{\value{col}}\news\endfirsthead
\fmc#3\news\endhead
\mtc{}\\
\mtc{\textit{Continuation on the next page}}
\endfoot
\endlastfoot
#4\news\end{longtable}}

\makeatother

\renewcommand{\ge}{\geqslant}
\renewcommand{\le}{\leqslant}
\newcommand{\notni}{\not\ni}
\newcommand{\fa}{\,\forall\,}
\newcommand{\exi}{\,\exists\,}

\newcommand{\bes}{\infty}
\newcommand{\es}{\varnothing}

\newcommand{\subs}{\subset}

\newcommand{\sm}{\setminus}

\newcommand{\cln}{\colon}
\newcommand{\nl}{\lhd}

\newcommand{\Ra}{\Rightarrow}
\newcommand{\Lra}{\Leftrightarrow}

\newcommand{\hra}{\hookrightarrow}
\newcommand{\dv}{\smash{\mskip3mu\lower1pt\hbox{\vdots}\mskip3mu}}

\newcommand{\wt}{\widetilde}
\newcommand{\wh}{\widehat}


\newcommand{\sums}[1]{\sum\limits_{{#1}}}

\renewcommand{\caps}[1]{\bigcap\limits_{{#1}}}

\newcommand*{\bw}[1]{#1\nobreak\discretionary{}{\hbox{$\mathsurround=0pt #1$}}{}}
\newcommand{\sco}{,\ldots,}

\newcommand{\sti}{\bw\times\ldots\bw\times}

\newcommand{\und}[1]{\underbrace{#1}}


\newcommand{\br}[1]{\bigl(#1\bigr)}
\newcommand{\Br}[1]{\Bigl(#1\Bigr)}

\newcommand{\ter}[1]{\textup{(}#1\textup{)}}

\newcommand{\bc}[1]{\bigl\{#1\bigr\}}

\newcommand{\mbb}{\mathbb}

\newcommand{\N}{\mbb{N}}


\newcommand{\Ga}{\Gamma}
\newcommand{\de}{\delta}

\newcommand{\si}{\sigma}


\DeclareMathOperator{\Ann}{Ann}

\DeclareMathOperator{\rad}{rad}
\DeclareMathOperator{\diam}{diam}


\newcommand{\fm}[1]{\footnote{#1}}

\newcommand{\fnmark}{\footnotemark}
\newcommand{\fntext}[1]{\footnotetext{#1}}
\newcommand{\fon}[2]{\renewcommand{\thefootnote}{#1}\fnmark\fntext{#2}}
\newcommand{\fnmar}[1]{\renewcommand{\thefootnote}{#1}\fnmark}

\newcommand{\bom}{\boldmath}

\newcommand{\vph}[1]{\vphantom{#1}}

\begin{document}

\author{O.\,G.\?Styrt}
\title{Orthogonality graphs of matrices\\
over commutative rings}
\date{}
\newcommand{\udk}{512.552+ 512.643+519.173.1+519.173.5}
\newcommand{\msc}{13A70+05C12+05C25+05C40}

\maketitle

{\leftskip\parind\rightskip\parind
The paper is devoted to studying the orthogonality graph of the matrix ring over a~commutative ring. It is proved that the orthogonality graph of the
ring of matrices with size greater than~$1$ over a~commutative ring with zero-divisors is connected and has diameter $3$ or~$4$\~ a~criterion for each
value is obtained. It is also shown that each of its vertices has distance at most~$2$ from some scalar matrix.

\smallskip

\textbf{Key words\:} associative ring with identity, commutative ring, zero-divisor, matrix ring, zero-divisor graph, orthogonality graph.\par}

\section{Introduction}\label{introd}

Researching properties of associative rings in terms of graphs of some naturally occurring algebraic binary relations takes an important place in modern
mathematics. Thus, \textit{a~zero-divisor graph} was first defined in 1986 by Beck~\cite{Beck} for a~commutative ring. Its vertices were all
zero-divisors, and edges connected exactly all pairs of distinct elements giving zero in product. But since 1999 one uses its more convenient
interpretation introduced by Anderson and Livingston in~\cite{AL1} via excluding the zero element of the ring from its vertex set. It is also proved
in~\cite{AL1} that the zero-divisor graph of a~commutative ring is connected and has diameter at most three\~ in the former treatment of the graph these
statements would be trivial. A~number of further papers also studies various characteristics of the zero-divisor graph\: center and radius~\cite{Rd},
concepts of planarity~\cite{AMY} and uniqueness of determining the ring by the graph up to an isomorphism~\cite{AL2,AM1}. For non-commutative rings,
there are several types of graphs defined by zero-divisors\:
\lontu{ >{\nwl}p{2cm}<{\nwl} | >{\nwl}p{2cm}<{\nwl} | >{\nwl}p{3cm}<{\nwl} | >{\nwl $}p{2cm}<{$ \nwl} | >{\nwl}p{1cm}<{\nwl} |}
{}
{\nc{>{\nwl}p{2cm}<{\nwl}}{Name} \nc{>{\nwl}p{2cm}<{\nwl}}{Edge orientation} \nc{>{\nwl}p{3cm}<{\nwl}}{Vertices}
\nc{>{\nwl}p{2cm}<{\nwl}}{Edge\nwl from~$x$ to~$y$} \nc{>{\nwl}p{1cm}<{\nwl}}{See}}{%
\an{or}
Directed zero-divisor graph & Yes & One- and two-sided\nwl zero-divisors & xy=0 & \cite{AM2,AM3}
\n{neor}
(Undirected) zero-divisor graph & No & Nonzero\nwl one- and two-sided\nwl zero-divisors & \cask{xy=0\\yx=0} & \cite{AM3}
\n{ort}
Orthogonality graph & No & Nonzero two-sided\nwl zero-divisors & \case{xy=0\\yx=0} & \cite{GM1,GM2}}

The main results for orthogonality graphs of non-commutative rings found by now concern primarily matrix rings. Thus, in the case of the basic ring being
a~skew field, the following properties of the orthogonality graph of the $(n\times n)$\dh matrix ring are obtained: once $n=2$, it is
disconnected and all its connected components have diameters at most~$2$, and, once $n\ge3$, it is connected and has diameter~$4$. These statements are
proved in 2014 for a~field~\cite{GM1} and later, in 2017\ti for an arbitrary skew field~\cite{GM2}\~ they can also be easily generalized to integral
domains (by reducing to the field of fractions).

In this paper, there will be the orthogonality graph of the matrix ring over a~commutative ring with zero-divisors studied and the following main result
proved.

\begin{theorem}\label{main} Let $R$ be a~commutative ring with zero-divisor set $Z_R\ne\{0\}$. Then, for any $n>1$, the orthogonality graph of the ring
of $(n\times n)$\dh matrices over~$R$ is connected and has diameter $3$ or~$4$, the value~$3$ being equivalent to the relation
\eqn{\label{crit}
\fa a_0\in Z_R\quad \exi a_1,a_2\in R\sm\{0\}\quad\quad \fa i,j\in\{0,1,2\},\ i\ne j\cln\quad a_ia_j=0,}
and each of its vertices has distance at most~$2$ from some scalar matrix.
\end{theorem}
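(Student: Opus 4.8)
The idea is to reduce everything to the arithmetic of zero\dh divisors in $R$, using two standard facts about matrices over a commutative ring. By McCoy's theorem, $X\in\Mat_n(R)$ is a one\dh sided — hence, by symmetry, two\dh sided — zero\dh divisor iff $Xv=0$ for some $v\in R^n\setminus\{0\}$ iff $\det X$ is a zero\dh divisor of $R$ (with $0$ counted); so the vertices of the orthogonality graph $\Gamma$ are exactly the nonzero $X$ with $\det X\in Z_R\cup\{0\}$. The tools will be the adjugate identity $X\,X^{\mathrm{adj}}=X^{\mathrm{adj}}\,X=(\det X)I$ and the remark that, for $n>1$, every nonzero matrix of rank $<n$ — in particular every outer product $vq^{\top}$, every $aE_{ij}$, and every scalar $aI$ with $a\in Z_R\setminus\{0\}$ — is a vertex. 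I would also fix $a^{*}\in Z_R\setminus\{0\}$ and $b^{*}\in R\setminus\{0\}$ with $a^{*}b^{*}=0$, record the elementary adjacencies ($aE_{ii}\perp bE_{jj}$ whenever $i\ne j$, for all $a,b$; $aE_{ii}\perp bE_{ii}\Leftrightarrow ab=0$; $aI\perp bI\Leftrightarrow ab=0$), and observe that $\{cE_{11}:c\in Z_R\setminus\{0\}\}$ carries, inside $\Gamma$, a copy of the zero\dh divisor graph of $R$, which is connected of diameter $\le3$ by \cite{AL1}.

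\textbf{Distance at most $2$ to a scalar.} Given a vertex $X$, pick $v\ne0$ with $Xv=0$ and $q\ne0$ with $q^{\top}X=0$; multiplying by the adjugate gives $(\det X)v=0$ and $(\det X)q=0$. If $\det X\ne0$ then $c:=\det X\in Z_R\setminus\{0\}$, and as soon as $Z:=vq^{\top}\ne0$ it is a vertex with $XZ=ZX=0$ and $cZ=0$, so $X-Z-cI$ is a path of length $\le2$. If $\det X=0$ then $X^{\mathrm{adj}}$, when nonzero, is itself orthogonal to $X$. The degenerate residue — $vq^{\top}=0$ for every admissible pair, or $X^{\mathrm{adj}}=0$ — I would handle by exhibiting directly a nonzero matrix orthogonal to $X$ whose entries have a common nonzero annihilator in $R$ (arguing from the module structure of $\ker X$ and $\ker X^{\top}$ and using $a^{*},b^{*}$), and then routing through the associated scalar. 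This case\dh bookkeeping is the first genuine point of the proof.

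\textbf{Connectivity and diameter $\le4$.} For $n\ge3$: given vertices $X,Y$, pick nonzero $v\in\ker X$, $q$ with $q^{\top}X=0$, $v'\in\ker Y$, $q'$ with $q'^{\top}Y=0$; since two linear forms in $n\ge3$ variables have a nontrivial common zero over any commutative ring, pick $x\ne0$ with $q\cdot x=q'\cdot x=0$ and $y\ne0$ with $v\cdot y=v'\cdot y=0$. Then $X-vq^{\top}-xy^{\top}-v'q'^{\top}-Y$ is a walk of length $\le4$ as long as the three middle matrices are nonzero, and the degenerate cases fold back into the preceding analysis, with the rank\dh one matrices replaced by suitable two\dh sided\dh orthogonal ones. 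The case $n=2$ must be treated separately, leaning more heavily on the zero\dh divisors of $R$ and on the connectivity of the embedded zero\dh divisor graph $\{cE_{11}\}$. Together these show $\Gamma$ is connected of diameter at most $4$.

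\textbf{The diameter is $3$ or $4$, with $3$ equivalent to \eqref{crit}.} The vertices $X_{0}=\sum_{i=1}^{n-1}E_{ii}$ and $Y_{0}=E_{n1}$ are non\dh adjacent ($Y_{0}X_{0}=E_{n1}\ne0$) and have no common neighbour (the neighbours of $X_{0}$ are exactly the $cE_{nn}$, $c\ne0$, none orthogonal to $Y_{0}$), so $d(X_{0},Y_{0})\ge3$; hence the diameter is $3$ or $4$. For the equivalence I would show that if \eqref{crit} holds, the middle vertex $xy^{\top}$ of the length\dh $4$ walk can be suppressed — the zero\dh divisor data attached to $X$ and to $Y$ can each be completed via \eqref{crit} into orthogonal configurations that meet after two steps — whereas if \eqref{crit} fails, a witnessing zero\dh divisor $a_{0}$ (for which $\Ann(a_{0})$ contains no nonzero square\dh zero element and no orthogonal pair) can be used to build two vertices whose neighbourhoods are as restricted as those of rank\dh $(n-1)$ matrices over a field, obstructing every candidate path of length $\le3$; such a pair lies at distance $4$. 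The main obstacle, I expect, is precisely this last equivalence: the sharp length\dh $3$ bound under \eqref{crit}, and especially the explicit distance\dh $4$ pair when \eqref{crit} fails.
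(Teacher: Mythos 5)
Your outline reproduces the classical field-case strategy (McCoy, adjugate, rank-one bridges $vq^{\top}$, an explicit non-adjacent pair), and some pieces are sound: the pair $X_0=\sum_{i<n}E_{ii}$, $Y_0=E_{n1}$ does give $d(X_0,Y_0)\ge3$, and the generic path $X\perp vq^{\top}\perp (\det X)E$ is fine when $\det X\ne0$ and $vq^{\top}\ne0$. But the proof has genuine gaps exactly where the paper's new content lies. First, over a ring with zero-divisors the ``degenerate residue'' ($v_iq_j=0$ for all admissible $v,q$, or $\wh{X}=0$) is not a corner case to be waved at: it is the heart of the problem, and your plan offers no mechanism to resolve it. The paper's resolution is Theorem~\ref{ide} (choose a maximal minor outside a proper ideal $I\ni\det A$ and border it to build $B\notin M_n(I)$ with $AB,BA\in M_n(I)$), whose Corollary~\ref{orde} produces a nonzero matrix orthogonal to $A$ lying in $cS$ for any $c\in\Ann(\det A)^*$; it is this membership in $cS$ (all entries killed by any $b$ with $bc=0$) that lets one route every vertex to a scalar in two steps and that replaces your missing ``common nonzero annihilator'' argument. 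In particular your $\det X=0$ case never identifies the scalar target at all. Second, $n=2$ is not treated: the rank-one bridge argument cannot be patched there, since over a field or domain the $2\times2$ graph is disconnected, so connectivity for $n=2$ must come from the zero-divisors of $R$ --- precisely what the paper's uniform scalar-routing argument (via the ideals $I_A=\Ann(\det A)$) provides and your sketch does not.

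Third, the equivalence of diameter $3$ with \eqref{crit} --- which you yourself flag as the main obstacle --- is left entirely as intention in both directions. The missing ideas are: (a) the paper's Lemma~\ref{ieq}, showing that if $d(A_1,A_2)>3$ then $\Ann(\det A_1)=\Ann(\det A_2)$ and this common ideal contains no two nonzero elements with product zero, so failure of a length-$3$ path forces the negation of \eqref{crit} for $a_0=\det A_1$; and (b) for the converse, an explicit distance-$4$ pair when \eqref{crit} fails, namely $A=J_n+a_0E_{n1}$ and $A^{T}$, whose neighbourhoods are computed exactly as $\Ann(a_0)^*E_{1n}$ and $\Ann(a_0)^*E_{n1}$, so that no neighbour of one is orthogonal to a neighbour of the other when $\Ann(a_0)$ has no zero-divisors. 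Your proposal gestures at both but proves neither, and without (a) even the upper bound ``\,\eqref{crit} implies diameter $\le3$\,'' has no argument. As it stands the proposal establishes only the lower bound $\diam\ge3$ and the easy generic cases; the connectivity for all $n>1$, the distance-$2$-to-a-scalar claim in full generality, and the criterion for diameter $3$ versus $4$ all still require the paper's ideal-theoretic machinery or an equivalent substitute.
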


\begin{theorem}\label{submain} Let $r$ be the radius of the graph under conditions of Theorem~\ref{main}. Then
\begin{nums}{-1}
\item\label{rad4} $2\le r\le4$\~
\item\label{rad3} if \eqref{crit} holds, then $r\in\{2;3\}$\~
\item\label{rad2} $r=2$ if and only if there exists an element $c\in R\sm\{0\}$ such that
\eqn{\label{cond}
\fa a\in Z_R\quad \Ann(c)\cap\Ann(a)\ne0.}
\end{nums}
\end{theorem}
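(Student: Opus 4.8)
Parts~\ref{rad4} and~\ref{rad3} I would dispatch at once: in any connected graph $\lceil D/2\rceil\le r\le D$ for the diameter $D$, and by Theorem~\ref{main} $D\in\{3;4\}$, with $D=3$ when \eqref{crit} holds; so $2\le r\le4$ in general and $r\le3$ (hence $r\in\{2;3\}$) under \eqref{crit}.

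For part~\ref{rad2} I would first record the basics about the orthogonality graph $\Gamma$ of $\Mat_n(R)$: the scalar matrix $cI$ is a vertex of $\Gamma$ exactly when $c\in Z_R\setminus\{0\}$; $cI$ is adjacent to a matrix $X$ exactly when $cX=0$; and (McCoy) a nonzero $X$ is a vertex iff $\det X\in Z_R$, in which case $XB=0$ implies $\det(X)\,B=0$ (multiply by $\operatorname{adj}(X)$). The crux is the lemma: \emph{for $c\in Z_R\setminus\{0\}$, the vertex $cI$ has eccentricity at most $2$ in $\Gamma$ if and only if $\Ann(c)\cap\Ann(a)\ne0$ for every $a\in Z_R$}, i.e.\ iff $c$ witnesses \eqref{cond} (any witness of \eqref{cond} automatically lies in $Z_R\setminus\{0\}$, by taking $a=0$). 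Granting the lemma, part~\ref{rad2} is quick: if \eqref{cond} holds then some scalar vertex has eccentricity $\le2$, so $r\le2$ and $r=2$ by part~\ref{rad4}; conversely, if $r=2$ then some vertex has eccentricity $\le2$, and it remains to extract \eqref{cond} from that.

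The ``only if'' direction of the lemma I would get by testing $cI$ against the vertices $Y_a:=\diag(a,1,\dots,1)$, $a\in Z_R$: each is a vertex (determinant $a\in Z_R$), and $cY_a\ne0$ for $a\ne0$ since the entry $c$ in position $(2,2)$ survives (using $n>1$); a length-$2$ path from $cI$ to $Y_a$ through a vertex $M$ forces $cM=0$ and $aM=\det(Y_a)M=0$, so a nonzero entry of the nonzero matrix $M$ lies in $\Ann(c)\cap\Ann(a)$, while the case $a=0$ is immediate from $\Ann(c)\ne0$. For ``if'', given a vertex $Y$ with $cY\ne0$ I put $a:=\det Y$, pick $0\ne b\in\Ann(c)\cap\Ann(a)$, and note $Z:=b\operatorname{adj}(Y)$ has $YZ=ZY=0$ and $cZ=0$: as soon as $Z\ne0$ it is an intermediate vertex and $d(cI,Y)\le2$. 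For the converse of part~\ref{rad2} (deriving \eqref{cond} from $r=2$) I argue the contrapositive: if \eqref{cond} fails and $X_0$ is a vertex with $\det X_0\ne0$, then failure of \eqref{cond} at $c=\det X_0\in Z_R\setminus\{0\}$ gives $a\in Z_R\setminus\{0\}$ with $\Ann(\det X_0)\cap\Ann(a)=0$ (necessarily $a\ne\det X_0$); then $Y:=I+(a-1)E_{jj}$, with $j$ chosen so $X_0Y\ne0$ (possible since $X_0\ne0$ and $n>1$), is a vertex with $\det Y=a$, is not adjacent to $X_0$, and a common neighbour $M$ of $X_0$ and $Y$ would have all entries in $\Ann(\det X_0)\cap\Ann(a)=0$, so $M=0$ --- a contradiction; hence $d(X_0,Y)\ge3$, $X_0$ does not realize $r$, and $r\ge3$.

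The hard part will be the degenerate situation where these determinant shortcuts collapse: in the ``if'' direction, the sub-case $b\operatorname{adj}(Y)=0$ (in particular when $\operatorname{adj}(Y)=0$, or when $\det Y=0$ and $b$ may be taken arbitrary in $\Ann(c)$); and in the contrapositive, the sub-case $\det X_0=0$. There orthogonality and its failure are not visible through determinants, and one must instead work with a genuine nonzero two-sided annihilator $W$ of $Y$ (respectively a nonzero kernel vector of $X_0$, which exists by McCoy since $0\in Z_R$) and transport it into the ideal $\Mat_n(\Ann(c))$ of matrices killed by $c$. The delicate point --- presumably handled just as the diameter estimate in Theorem~\ref{main} --- is that when the relevant entries are zero-divisors confined to an almost-square-zero ideal, one must descend from $\operatorname{adj}$ to lower-order minors and use the internal structure of the annihilators $\Ann(a)$, $a\in Z_R$, rather than $\det$ alone. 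Once that case is settled, both halves of the lemma, and with them Theorem~\ref{submain}, are complete.
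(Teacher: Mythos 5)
Your parts \ref{rad4} and \ref{rad3} are fine and coincide with the paper's argument: they follow from Theorem~\ref{main} together with $\rad\le\diam\le2\cdot\rad$.

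In part \ref{rad2}, however, the two sub-cases you yourself flag as ``the hard part'' are exactly where the substance of the statement lies, and neither is closed, so the proposal has genuine gaps. In the direction \eqref{cond}$\,\Rightarrow r=2$, when $Z:=b\wh{Y}=0$ you need, for the given $b\in\Ann(\det Y)^*$ with $cb=0$, some matrix $B$ with $bB\ne0$ and $YB$, $BY$ annihilated by $b$; this is precisely the paper's Theorem~\ref{ide} and Corollary~\ref{orde} (a matrix built from a minor of maximal order whose determinant does not lie in $\Ann(b)$, a genuine generalization of the adjugate), and it is not a consequence of the \emph{statement} of Theorem~\ref{main}, so ``presumably handled just as the diameter estimate'' does not discharge it. In the direction $r=2\Rightarrow$ \eqref{cond}, your contrapositive treats only candidate centers $X_0$ with $\det X_0\ne0$; vertices with $\det X_0=0$ are plentiful (e.g.\ $aE_{11}$, $a\in Z_R^*$), for them the witness $c=\det X_0$ is unavailable, and your remark about kernel vectors is a gesture, not an argument. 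The paper avoids this dichotomy altogether: from a center $C$ with $d(C,A)\le2$ for all vertices $A$ it fixes an arbitrary nonzero entry $c=c_{kl}$ of $C$ and, for each $a\in Z_R$, tests $C$ against $A:=\Br{\sums{i\ne k}E_{i,\si(i)}}+aE_{km}$ with $m=\si(k)\ne l$, whose orthogonal set is exactly $\Ann(a)^*E_{mk}$; the forced common neighbour $B=bE_{mk}$ then yields $0=(BC)_{ml}=bc$, i.e.\ $0\ne b\in\Ann(c)\cap\Ann(a)$. Your diagonal test matrices $\diag(a,1\sco1)$ accomplish the analogous task only for scalar centers, which is why your lemma alone cannot give the converse; until the two flagged sub-cases are actually proved, part \ref{rad2} remains open in both directions.
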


\section{Auxiliary agreements}

In the paper, the following notations and agreements will be used.

\begin{nums}{-1}

\item Set-theoretical\:
\begin{itemize}
\item While listing elements of a~disordered set, figured brackets are used. As for elements of an ordered tuple, they are listed in round brackets and
can be repeated.
\item $D^n:=\und{D\sti D}_n$ is the $n$\dh ary Cartesian power of a~set~$D$.
\end{itemize}

\item General algebraic\:
\begin{itemize}
\item All rings considered are supposed to be associative and with identity.
\item $R$ is an arbitrary ring.
\item For any subset $D\subs R$, define $D^*:=D\sm\{0\}$. In particular, by~$R^*$ denote the subset of all nonzero (not necessarily invertible as in
standard interpretation) elements of~$R$.
\item An ideal in~$R$ is \textit{proper} if it does not equal~$R$.
\item $M_{m\times n}(R)$ is the $R$\dh module of $(m\times n)$\dh matrices over~$R$\~ $M_n(R)$ is the ring $M_{n\times n}(R)$. If in the brackets the
ring is replaced with some of its subsets~$D$, then the subset of all matrices with entries from~$D$ is meant.
\item $0^m_n$ is the zero $(m\times n)$\dh matrix\~ $0_n:=0^n_n$\~ $E_n$ is the identity $(n\times n)$\dh matrix\~ $J_r$ is the Jordan cell of size~$r$
with eigenvalue~$0$. If the matrix sizes are clear from the context, then the indices can be omitted.
\item $E_{kl}$ is the matrix unit $(a_{ij})$, $a_{ij}:=\de_{ki}\de_{lj}$.
\item For a~square matrix~$A$ over a~commutative ring\: $\wt{A}$ is its cofactor matrix\~ $\wh{A}:=\br{\wt{A}}^T$.
\item If $A=(a_{k_1,k_2})\in M_{n_1\times n_2}(R)$, $P_i\in\{1\sco n_i\}^{m_i}$ ($i=1,2$), then $A^{P_1}_{P_2}$ is the matrix
$(b_{l_1,l_2})\in M_{m_1\times m_2}(R)$, $b_{l_1,l_2}:=a_{k_1(l_1),k_2(l_2)}$, where $k_i(l_i)$ is the $l_i$\dh th element of~$P_i$. If numbers are
repeated neither in~$P_1$, nor in~$P_2$, then $A^{P_1}_{P_2}$ is the submatrix of~$A$ with row and column numbers from $P_1$ and~$P_2$ respectively.
\end{itemize}

\item On zero-divisor types\:
\begin{itemize}
\item An element $a\in R$ is called
\begin{itemize}
\item a~\textit{left \ter{\textup{resp.} right} zero-divisor} if there exists an element $b\in R^*$ such that $ab=0$ (resp. $ba=0$)\~
\item a~\textit{zero-divisor} if it is either left or right zero-divisor\~
\item a~\textit{two-sided zero-divisor} if it is both left and right zero-divisor.
\end{itemize}
At that,
\begin{itemize}
\item in a~commutative ring, the concepts of all zero-divisor types are equivalent\~
\item zero is a~two-sided zero-divisor\~ if there are no other zero-divisors, then $R$ is called a~\textit{ring without zero-divisors}.
\end{itemize}
\item An \textit{integral domain} is a~commutative ring without zero-divisors.
\end{itemize}

\item From general graph theory\:
\begin{itemize}
\item All graphs considered are assumed to be undirected.
\item $\Ga=(V,E)$ is an arbitrary graph\~ $V$ and~$E$ are its vertex and edge sets respectively. In doing so, one can (usually with more convenience)
define~$E$ via a~symmetric binary relation on~$V$.
\item Two vertices are \textit{adjacent} if they are connected with an edge.
\item A~\textit{subgraph} is a~graph with vertex set $V'\subs V$ and, unless otherwise stated, with the same binary relation restricted on~$V'$.
\item A~\textit{path} is a~sequence of vertices where any two neighbor ones are adjacent.
\item The \textit{length} of a~path is the number of its edges.
\item The \textit{distance} between vertices $v$ and~$w$ (not. $d(v,w)$) is the minimum of lengths of paths between them\~ if they do not exist, then set
$d(v,w):=+\bes$\~ the sign is obvious in this context and therefore will be omitted. Clearly, $\br{d(v,w)=0}\Lra(v=w)$.
\item The \textit{distance} from a~vertex~$v$ to a~subset $W\subs V$ (not. $d(v,W)$) is the number\fon{*}{Possibly $\bes$.}
\equ{
\min\bc{d(v,w)\cln w\in W}.}
\item $d(v):=\sup\bc{d(v,w)\cln w\in W}$ ($v\in V$).
\item The \textit{diameter} of~$\Ga$ is the number\fnmar{*}\
\equ{
\diam(\Ga):=\sup\bc{d(v,w)\cln v,w\in V}=\max\bc{d(v)\cln v\in W}.}
\item \item The \textit{radius} of~$\Ga$ is the number\fnmar{*}\ $\rad(\Ga):=\min\bc{d(v)\cln v\in W}$. Clearly,
\eqn{\label{radi}
\rad(\Ga)\le\diam(\Ga)\le2\cdot\rad(\Ga).}
\item A~graph is \textit{connected} if there exists a~path between any two of its vertices.
\begin{note} It is easy to see that a~graph with finite diameter is connected. The converse fails\~ an example is the set of positive integers with the
neighborhood relation.
\end{note}
\end{itemize}

\item On special graphs in algebraic structures\:
\begin{itemize}
\item $O(R)$ is the orthogonality graph of the ring~$R$ (for a~commutative ring it is the same as the zero-divisor graph).
\item Vertices of $O(R)$ are all nonzero two-sided zero-divisors of~$R$\~ the orthogonality relation ($xy=yx=0$) is written as ($x\perp y$)\~ $O_R(x)$ is
the set of all vertices orthogonal to~$x$.
\end{itemize}

\end{nums}

\section{Proofs of the results}

Consider an arbitrary commutative ring~$R$. Denote by $\Ann(a)$ ($a\in R$) the ideal $\{x\bw\in R\cln ax=0\}$ and by~$Z_R$ the set
$\bc{a\in R\cln\Ann(a)\ne0}$ of all zero-divisors. Further, let $S$ be the ring $M_n(R)$ ($n>1$). Via the natural ring embedding $R\hra S,\,a\to aE$,
identify~$R$ with the subring $RE\subs S$ (and, thus, $O(R)$\ti with a~subgraph of the graph $O(S)$). For $A\in S$, set $I_A:=\Ann(\det A)\nl R$.

The graph $O(R)$ is connected and has diameter at most~$3$ (see Theorem~2.3 in~\cite[\Ss2]{AL1}). Besides, if $R$ is a~skew body, then
\begin{nums}{-1}
\item once $n=2$, the graph $O(S)$ is disconnected and all its connected components have diameters $\le2$\~
\item once $n\ge3$, the graph $O(S)$ is connected and has diameter~$4$.
\end{nums}
These results are obtained in~\cite[\Ss4]{GM1} for fields (Lemma~4.1 and Theorem~4.5 respectively), and in~\cite[\Ss2]{GM2} are generalized to arbitrary
skew-fields (Lemma~2.2 and Theorem~2.1 respectively). They are also shifted to integral domains (by reducing to the field of fractions).

\begin{theorem}\label{ide} For any matrix $A\in S$ and proper ideal $I\nl R$ containing $\det A$, there exists a~matrix $B\in S\sm\br{M_n(I)}$ such that
$AB,BA\in M_n(I)$.
\end{theorem}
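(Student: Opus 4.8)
The plan is to construct $B$ as a (padded) adjugate matrix, using the identity $A\widehat A=\widehat AA=(\det A)E$ valid over any commutative ring together with its ``mixed'' analogues (false cofactor expansions). Taking $B=\widehat A$ works exactly when some $(n-1)\times(n-1)$ minor of~$A$ avoids~$I$; in general one applies the same idea to a square submatrix of~$A$ of size one larger than the largest minor avoiding~$I$.

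If $A\in M_n(I)$, then $B:=E_n$ works, since $AE_n=E_nA=A\in M_n(I)$ while $E_n\notin M_n(I)$ because $1\notin I$. So assume $A\notin M_n(I)$ and let $k$ be the largest integer admitting a $(k\times k)$\dh submatrix of~$A$ whose determinant lies outside~$I$. Then $k\ge1$ (some entry of~$A$ avoids~$I$) and $k\le n-1$ (every $(n\times n)$\dh minor equals $\pm\det A\in I$), so $k+1\le n$. Fix $P,Q\subs\{1\sco n\}$ with $|P|=|Q|=k$ and $\det\br{A^P_Q}\notin I$, choose $p\notin P$ and $q\notin Q$, set $P':=P\cup\{p\}$, $Q':=Q\cup\{q\}$ and $N:=A^{P'}_{Q'}\in M_{k+1}(R)$. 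Define $B\in S$ by $B^{Q'}_{P'}:=\widehat N$ and all remaining entries zero.

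Then $B\notin M_n(I)$: the cofactor of~$N$ at the position coming from row~$p$ and column~$q$ equals $\pm\det\br{A^P_Q}\notin I$, so $\widetilde N$ — hence $\widehat N=\br{\widetilde N}^T$, hence~$B$ — has an entry outside~$I$. It remains to see $AB,BA\in M_n(I)$. An entry $(AB)_{ij}$ vanishes unless $j\in P'$, and for $j\in P'$ it equals $\sum_{t\in Q'}a_{it}\br{\widetilde N}_{jt}$, which by the cofactor expansion along a single row is the determinant of the matrix obtained from~$N$ by overwriting its $j$\dh th row with $(a_{it})_{t\in Q'}$. When $i\in P'$ this either leaves~$N$ unchanged ($i=j$, value $\det\br{A^{P'}_{Q'}}$) or repeats a row ($i\ne j$, value~$0$); when $i\notin P'$ it yields $A^{(P'\sm\{j\})\cup\{i\}}_{Q'}$, a genuine $(k+1)$\dh minor of~$A$ because $i\notin P'$ keeps the row indices distinct. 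In every case the value lies in~$I$ (the minors by maximality of~$k$), so $AB\in M_n(I)$; the inclusion $BA\in M_n(I)$ follows by the transposed argument, overwriting a column of~$N$ in place of a row.

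The substantive part is the bookkeeping of the last paragraph: matching each entry of $AB$ and of $BA$ with a determinant of ``overwrite one row/column of~$N$'' type and recognizing it as a $(k+1)$\dh minor of~$A$ or as zero. Two design choices make it work: taking $p\notin P$ and $q\notin Q$, so that the enlarged index sets really have $k+1$ distinct elements (and hence yield honest minors of~$A$), and enlarging to size $k+1$ rather than~$k$, so that $\det N$ is itself a $(k+1)$\dh minor of~$A$ — hence in~$I$ — rather than the element $\det\br{A^P_Q}\notin I$.
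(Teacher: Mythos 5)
Your proof is correct and follows essentially the same route as the paper: choose $k$ maximal with some $k\times k$ minor outside~$I$, enlarge to a $(k+1)\times(k+1)$ submatrix, place its adjugate (padded by zeros) as~$B$, and recognize the entries of $AB$ and $BA$ as $(k+1)$\dh minors of~$A$ (hence in~$I$ by maximality) or as determinants with a repeated row/column (hence zero). The only cosmetic differences are that you treat $A\in M_n(I)$ separately via $B=E_n$ and do the index bookkeeping directly, whereas the paper covers that case uniformly through the empty $0\times0$ minor and normalizes to the leading principal block by conjugating with monomial matrices.
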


\begin{proof} For $m\in\N$, set $Q_m:=\{1\sco m\}$ and $P_m:=(1\sco m)\in\N^m$.

Consider all triples $(k,P',P\")$ ($k\ge0$, $P',P\"\in(Q_n)^k$) satisfying the relation $\det(A^{P'}_{P\"})\bw\notin I$. For each of them, numbers are
repeated neither in~$P_1$, nor in~$P_2$, and, by condition, $k<n$. Besides, at least one of such triples exists\: for $k:=0$ and empty tuples $P',P\"$,
the corresponding $(0\times 0)$\dh matrix has determinant $1\notin I$. Hence, we can fix one of these triples with the largest possible~$k$, and then
$0\le k<n$, $m:=k+1\in Q_n$.

\begin{cas}\label{fir} $P'=P\"=P_k$.
\end{cas}

By construction, $\det(A^{P_k}_{P_k})\notin I$. Further, set $C:=A^{P_m}_{P_m}\in M_m(R)$,
\equ{
B:=\rbmat{\wh{C} & 0^m_{n-m}\\0^{n-m}_m & 0_{n-m}}\in S.}
Then $b_{m,m}=\det(A^{P_k}_{P_k})\notin I$ implying $B\notin M_n(I)$. Show that $AB,A^TB^T\in M_n(I)$, i.\,e. that, for any $p,q\in Q_n$, the matrix
entries $(AB)_{p,q}$ and $(A^TB^T)_{p,q}$ belong to~$I$. Assume that $p\in Q_n$ and $q\in Q_m$ (otherwise $(AB)_{p,q}=(A^TB^T)_{p,q}=0$). Let
$P\in(Q_n)^m$ be the tuple obtained form~$P_m$ by changing the $q$\dh th element with~$p$. Due to maximality of~$k$ and the inequality $m>k$, we have
$\det(A^P_{P_m}),\det(A^{P_m}_P)\in I$,
\equ{\begin{aligned}
(AB)_{p,q}=\sums{i\in Q_n}(a_{p,i}b_{i,q})=\sums{i\in Q_m}\br{a_{p,i}(\wh{C})_{i,q}}=&\sums{i\in Q_m}\br{a_{p,i}(\wt{C})_{q,i}}=\det(A^P_{P_m})\in I;\\
(A^TB^T)_{p,q}=\sums{i\in Q_n}\br{(A^T)_{p,i}(B^T)_{i,q}}=&\sums{i\in Q_m}\br{a_{i,p}(\wt{C})_{i,q}}=\det(A^{P_m}_P)\in I.
\end{aligned}}
Thereby, it is proved that $AB,(BA)^T=A^TB^T\in M_n(I)$ implying $BA\in M_n(I)$.

\begin{cas}\label{arb} $P',P\"\in(Q_n)^k$ are arbitrary tuples.
\end{cas}

In each of the tuples $P'$ and~$P\"$ all numbers are distinct. Hence, via suitable permutations of rows and columns, one can obtain from~$A$
a~matrix~$A_0$ satisfying Case~\ref{fir} with the same~$k$. By proved above, there exists a~matrix $B_0\in S\sm\br{M_n(I)}$ such that
$A_0B_0,B_0A_0\bw\in M_n(I)$. At that, there exist monomial (therefore, invertible) matrices $C_1,C_2\in S$ such that $A\bw=C_1A_0C_2^{-1}$.
Left (resp. right) multiplying a~matrix by a~monomial one permutes its rows (resp. columns), and, consequently, $B:=C_2B_0C_1^{-1}\in S\sm\br{M_n(I)}$,
$AB\bw=C_1(A_0B_0)C_1^{-1}\bw\in M_n(I)$, $BA=C_2(B_0A_0)C_2^{-1}\in M_n(I)$.
\end{proof}

\begin{imp}\label{orde} If $A\in S$ and $c\in I_A^*$, then, in the subset $(cS)^*\subs S$, there exists an element orthogonal to~$A$.
\end{imp}

\begin{proof} By condition, $I:=\Ann(c)\nl R$ is a~proper ideal containing $\det A$. According to Theorem~\ref{ide}, there exists a~matrix
$B\in S\sm\br{M_n(I)}$ such that $AB,BA\in M_n(I)$. Thus, $C:=cB\ne0$ and $c(AB)=c(BA)=0$, i.\,e. $C\in(cS)^*$ and $AC=CA=0$.
\end{proof}

\begin{lemma}\label{eqi} For any $A\in S$, the following conditions are equivalent\:
\begin{nums}{-1}
\item\label{dedi} $\det A\in Z_R$\~
\item\label{ine} $I_A\ne0$\~
\item\label{orto} in~$S^*$, there exists an element orthogonal to~$A$\~
\item\label{zdi2} $A$ is a~two-sided zero-divisor\~
\item\label{zdi1} $A$ is a~zero-divisor.
\end{nums}
\end{lemma}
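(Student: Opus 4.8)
I would prove the five conditions equivalent by closing the cyclic chain of implications $\ref{dedi}\Ra\ref{ine}\Ra\ref{orto}\Ra\ref{zdi2}\Ra\ref{zdi1}\Ra\ref{dedi}$, after which the equivalence of all of them follows at once. Four of the five links are purely formal. The step $\ref{dedi}\Ra\ref{ine}$ (together with its converse) is nothing but the definitions involved\: $I_A=\Ann(\det A)$, and, by definition of~$Z_R$, $\br{\det A\in Z_R}\Lra\br{\Ann(\det A)\ne0}$. For $\ref{ine}\Ra\ref{orto}$ I would simply invoke Corollary~\ref{orde}\: assuming $I_A\ne0$, pick any $c\in I_A^*$\~ the corollary then produces an element of $(cS)^*\subs S^*$ orthogonal to~$A$. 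The link $\ref{orto}\Ra\ref{zdi2}$ is immediate, since a~matrix $B\in S^*$ with $AB=BA=0$ exhibits $A$ at once as a~left and as a~right zero-divisor\~ and $\ref{zdi2}\Ra\ref{zdi1}$ is merely the definition of a~zero-divisor.

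The only link carrying genuine content is $\ref{zdi1}\Ra\ref{dedi}$, and the tool for it is the standard identity $\wh{A}A=A\wh{A}=(\det A)E$, valid over any commutative ring (here $\wh{A}$ is the transpose of the cofactor matrix of~$A$). If $A$ is a~zero-divisor in~$S$, then either $AB=0$ for some $B\in S^*$ or $BA=0$ for some $B\in S^*$\~ multiplying the former equality by~$\wh{A}$ on the left, or the latter by~$\wh{A}$ on the right, gives $(\det A)B=0$ in both cases. Since $B\ne0$, some entry~$b$ of~$B$ is nonzero, so $(\det A)b=0$ with $b\ne0$, whence $\det A\in Z_R$, i.\,e.~\ref{dedi} holds.

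I do not expect a~real obstacle inside the argument for this lemma: the conceptual heart\ti passing from the ``scalar'' condition $\det A\in Z_R$ to the ``matrix'' condition that $A$ be orthogonal to a~nonzero matrix\ti has already been done in Theorem~\ref{ide} and its Corollary~\ref{orde}, which is exactly what supplies $\ref{ine}\Ra\ref{orto}$. The one point to keep in mind is the paper's conventions\ti $R^*=R\sm\{0\}$ need not consist of invertible elements, and $0\in Z_R$ whenever $R\ne\{0\}$\ti so that the closing deduction ``$(\det A)b=0$, $b\ne0$, hence $\det A\in Z_R$'' is legitimate even in the case $\det A=0$.
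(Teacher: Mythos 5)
Your proposal is correct and follows essentially the same route as the paper: the formal implications plus Corollary~\ref{orde} for $\text{\ref{ine}}\Ra\text{\ref{orto}}$, and the adjugate identity $\wh{A}A=A\wh{A}=(\det A)E$ for $\text{\ref{zdi1}}\Ra\text{\ref{dedi}}$. The only cosmetic difference is that the paper reduces to the left zero-divisor case ``without loss of generality,'' whereas you treat both sides explicitly.
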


\begin{proof} The implications $\text{\ref{dedi}}\Lra\text{\ref{ine}}$ and $\text{\ref{orto}}\Ra\text{\ref{zdi2}}\Ra\text{\ref{zdi1}}$ obviously follow
from definitions, and the implication $\text{\ref{ine}}\Ra\text{\ref{orto}}$\ti from Corollary~\ref{orde}.

Prove the implication $\text{\ref{zdi1}}\Ra\text{\ref{dedi}}$. Suppose that, without loss of generality, $A$ is a~left zero-divisor, i.\,e. that $AB=0$
for some $B\in S^*$. Then $\wh{A}A=(\det A)E$ implying $(\det A)B\bw=\wh{A}AB=0$. It remains to use non-triviality of~$B$.
\end{proof}

\begin{imp} All zero-divisors in~$S$ are two-sided.
\end{imp}

Let $Z_S\subs S$ be the subset of all elements $A\in S$ satisfying each of the equivalent conditions \ref{dedi}---\ref{zdi1} of Lemma~\ref{eqi},
i.\,e. the set of all zero-divisors of the ring~$S$. Then the vertex set of the graph $O(S)$ is~$Z_S^*$.

Further, we will assume that $Z_R^*\ne\es$.

\begin{stm}\label{bas} If $I\nl R$ and $I\ne0$, then $Z_R\cap I\ne\{0\}$.
\end{stm}

\begin{proof} Suppose that $Z_R\cap I=\{0\}$. There exist elements $b\in I^*$ and $c\in Z_R^*$\~ then $bc\bw\in Z_R\cap I=\{0\}$. So, $bc=0\ne c$ that
implies $b\in Z_R\cap I^*=\es$, a~contradiction.
\end{proof}

\begin{lemma}\label{fam} If, for a~subset $D\subs S$, the ideal $I:=\caps{A\in D}I_A\nl R$ is nonzero, then there exist elements $b\in Z_R^*$ and
$C_A\in S^*$, $A\in D$, such that $bE\perp C_A\perp A$ \ter{$A\in D$}.
\end{lemma}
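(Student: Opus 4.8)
The plan is to obtain, for each $A\in D$, the two orthogonality relations $bE\perp C_A$ and $C_A\perp A$ with one and the same scalar~$b$, by first extracting a single ring element $c$ lying simultaneously in all the ideals $I_A$ and then choosing $b$ to annihilate~$c$.

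First I would apply Statement~\ref{bas} to the ideal $I$: since $I\ne0$, there is a nonzero element $c\in Z_R\cap I$. As $c\in I\subs I_A$ for every $A\in D$ and $c$ is a zero-divisor, the annihilator $\Ann(c)\nl R$ is nonzero. Applying Statement~\ref{bas} a second time, now to the nonzero ideal $\Ann(c)$, I obtain an element $b\in Z_R^*$ with $bc=0$.

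Now fix $A\in D$. Since $c\in I_A^*$, Corollary~\ref{orde} yields a matrix $C_A\in(cS)^*$ orthogonal to~$A$; by the construction used in that corollary one may take $C_A=cB_A$ with $B_A\in S$, so $C_A\ne0$ and $AC_A=C_AA=0$. Every entry of $C_A$ is a multiple of~$c$, and $bc=0$, hence $bC_A=C_Ab=0$, which is exactly $bE\perp C_A$. Thus $b\in Z_R^*$, and for each $A\in D$ the matrix $C_A\in S^*$ satisfies $bE\perp C_A\perp A$, which is the assertion.

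The delicate point, and the step I expect to be the main obstacle, is getting the order of the two choices right. It is natural to try taking the scalar $b\in Z_R^*\cap I$ first and then looking for a middle matrix with entries in $\Ann(b)\cap I_A$, so as to apply Corollary~\ref{orde} with some $c\in(\Ann(b)\cap I_A)^*$; but this intersection may vanish: e.g. over $R=k[x,y]/(xy)$ for a matrix $A$ with $\det A=y$ and $b=x$ one has $\Ann(b)\cap I_A=(y)\cap(x)=0$. Choosing $c$ inside the common ideal $I$ first, and only afterwards picking $b$ in $\Ann(c)$, avoids this.
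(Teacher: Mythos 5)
Your proof is correct and follows essentially the same route as the paper: pick $c\in Z_R^*\cap I$ by Statement~\ref{bas}, choose $b\in Z_R^*$ with $bc=0$, and for each $A\in D$ apply Corollary~\ref{orde} to get $C_A\in(cS)^*$ orthogonal to~$A$, noting $bC_A\in bcS=0$. The second invocation of Statement~\ref{bas} for $\Ann(c)$ is harmless but unnecessary, since any nonzero $b$ annihilating $c\ne0$ is automatically a zero-divisor.
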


\begin{proof} According to Statement~\ref{bas}, the ideal~$I$ contains an element $c\in Z_R^*$. Then $bc=0$ where $b\in Z_R^*$. Further, for any
$A\in D$, we have $c\in I_A^*$ and, by Corollary~\ref{orde}, there exist an element $C_A\in(cS)^*$ orthogonal to~$A$\~ at that, $bC_A\in bcS=0$,
$bE\perp C_A$.
\end{proof}

\begin{imp}\label{di24}\
\begin{nums}{-1}
\item For any $A\in Z_S^*$, we have $d\br{A,O(R)}\le2$.
\item If $A_1,A_2\in Z_S^*$ and $I_{A_1}\cap I_{A_2}\ne0$, then $d(A_1,A_2)\le4$.
\end{nums}
\end{imp}

\begin{proof} It suffices to apply Lemma~\ref{fam} to the subsets $\{A\},\{A_1,A_2\}\subs S$.
\end{proof}

\begin{lemma}\label{di3} If $A_i\in Z_S^*$, $c_i\in I_{A_i}^*$ \ter{$i=1,2$} and $c_1c_2=0$, then $d(A_1,A_2)\le3$.
\end{lemma}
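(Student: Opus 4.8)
The plan is to produce an explicit path of length at most~$3$ from $A_1$ to $A_2$ in $O(S)$ by finding a single matrix $C$ that is orthogonal to one of the $A_i$ and to some matrix $D$ orthogonal to the other. Concretely, I would use Corollary~\ref{orde} twice: since $c_1\in I_{A_1}^*$, there is $B_1\in(c_1S)^*$ with $B_1\perp A_1$, and since $c_2\in I_{A_2}^*$, there is $B_2\in(c_2S)^*$ with $B_2\perp A_2$. Writing $B_1=c_1B_1'$ and $B_2=c_2B_2'$ for suitable $B_1',B_2'\in S$, the relation $c_1c_2=0$ gives $B_1B_2'\in c_1c_2 S=0$ and likewise $B_2'B_1=0$, so in fact $B_1\perp B_2$ would already give a path $A_1-B_1-B_2-A_2$ — provided $B_1,B_2$ are genuine vertices, i.e. nonzero two-sided zero-divisors, and provided no two consecutive matrices in the path coincide.

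The first subtlety is that $B_1$ and $B_2$ are nonzero by construction and lie in $Z_S$ (each is orthogonal to a nonzero matrix, hence a zero-divisor, hence by Lemma~\ref{eqi} a two-sided zero-divisor), so they are legitimate vertices of $O(S)$. The second subtlety is degeneracy: if $B_1=B_2$ the "path" collapses, and if $B_1=A_2$ or $B_2=A_1$ or $A_1=A_2$ we get a shorter path, which is still fine since we only need $d(A_1,A_2)\le3$. The case $B_1=A_1$ cannot occur because $A_1\perp A_1$ would force $A_1^2=0$ while $A_1\ne0$ is consistent with that — so actually $B_1=A_1$ is not automatically excluded, but if $B_1=A_1$ then $A_1\perp A_1$, hence $A_1\perp B_2$ directly (as $A_1=B_1\perp B_2$), giving the path $A_1-B_2-A_2$ of length~$2$. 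In every degenerate subcase one extracts a path of length $\le3$, so the worst case $A_1-B_1-B_2-A_2$ with all four matrices distinct yields exactly $d(A_1,A_2)\le3$.

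I expect the main obstacle to be purely bookkeeping: carefully checking that $B_1\perp B_2$ really follows from $c_1c_2=0$. The point is that $B_1=c_1B_1'$ means every entry of $B_1$ is divisible by $c_1$ (indeed lies in $c_1R$), and $B_2=c_2B_2'$ similarly, so each entry of $B_1B_2$ is a sum of products of an element of $c_1R$ with an element of $c_2R$, hence lies in $c_1c_2R=0$; thus $B_1B_2=0$, and symmetrically $B_2B_1=0$. Here one should be slightly careful that "$B_1\in(c_1S)^*$" in Corollary~\ref{orde} indeed means $B_1\in c_1S$, which it does by the proof of that corollary ($C=cB$). Once $B_1\perp B_2$ is in hand, the sequence $(A_1,B_1,B_2,A_2)$ is a walk in $O(S)$, and deleting repeated consecutive vertices turns it into a path of length $\le3$ between $A_1$ and $A_2$, which is what we want.
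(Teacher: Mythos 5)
Your proposal is correct and follows essentially the same route as the paper: apply Corollary~\ref{orde} twice to get $C_i\in(c_iS)^*$ with $C_i\perp A_i$, and then observe that $C_1C_2,C_2C_1\in c_1c_2S=0$, giving the walk $A_1$--$C_1$--$C_2$--$A_2$ and hence $d(A_1,A_2)\le3$. The extra bookkeeping you do (checking that the $C_i$ are genuine vertices and handling degenerate coincidences) is sound but not needed beyond what the paper's two-line argument already covers.
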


\begin{proof} By Corollary~\ref{orde}, for each $i=1,2$, there exists an element $C_i\in(c_iS)^*$ such that $C_i\perp A_i$. In this case,
$C_1C_2,C_2C_1\in c_1c_2S=0$, $C_1\perp C_2$.
\end{proof}

\begin{df} We will say that an ideal $I\nl R$ \textit{does not have zero-divisors} if $I^*I^*\notni0$, i.\,e. if the \textit{ring}\fm{In general, without
identity.}~$I$ does not have zero-divisors.
\end{df}

\begin{lemma}\label{ieq} If $A_1,A_2\in Z_S^*$ and $d(A_1,A_2)>3$, then $I_{A_i}$ \ter{$i=1,2$} is the same ideal without zero-divisors.
\end{lemma}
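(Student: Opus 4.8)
We are given $A_1,A_2\in Z_S^*$ with $d(A_1,A_2)>3$, and we must show that $I_{A_1}=I_{A_2}$ and that this common ideal has no zero-divisors. The strategy is to extract consequences from the negations of the two auxiliary distance bounds, Corollary~\ref{di24}(ii) and Lemma~\ref{di3}, and combine them.

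\textbf{Step 1: $I_{A_1}\cap I_{A_2}=0$.} If the intersection were nonzero, then Corollary~\ref{di24}(ii) would give $d(A_1,A_2)\le4$, which is compatible with $d(A_1,A_2)>3$, so this alone does not suffice. Instead I would argue more carefully. By Lemma~\ref{di3}, if there exist $c_i\in I_{A_i}^*$ with $c_1c_2=0$, then $d(A_1,A_2)\le3$, contradicting the hypothesis. Hence: \emph{for all $c_1\in I_{A_1}^*$ and $c_2\in I_{A_2}^*$ we have $c_1c_2\ne0$.} In particular, taking $c_1=c_2=c$ for any $c\in (I_{A_1}\cap I_{A_2})^*$ would force $c^2\ne 0$, so this does not immediately kill the intersection; I would instead use it to show $I_{A_1}\cap I_{A_2}$, if nonzero, contains no zero-divisor. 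But $I_{A_1}\cap I_{A_2}$ is an ideal, and if it is nonzero, Statement~\ref{bas} says $Z_R\cap(I_{A_1}\cap I_{A_2})\ne\{0\}$, so it contains some $c\in Z_R^*$; then $c\in I_{A_1}^*\cap I_{A_2}^*$ and $cd=0$ for some $d\in R^*$. Now I need $d$ to lie in $I_{A_1}^*$ or $I_{A_2}^*$ to reach a contradiction with Step~1's conclusion — and indeed $c\in Z_R$ with $cd=0$ means $d\in\Ann(c)$, and I would try to show $d\in I_{A_1}$ or bring $c$ itself into play: since $c^2$ could be zero we might directly contradict "$c_1c_2\ne0$ for all choices" by taking $c_1=c_2=c$ if $c^2=0$. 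If $c^2\ne0$, one argues that $c\in\Ann(d)$ with $d\ne 0$ shows... Actually the cleanest route: the conclusion of the Lemma~\ref{di3} argument gives $I_{A_1}^*\cdot I_{A_2}^*\not\ni 0$, and since $I_{A_1}\cap I_{A_2}$ being a nonzero ideal contains a zero-divisor $c$ with annihilator containing a nonzero $d$, and $d\in\Ann(c)$, one wants $\Ann(c)$ to meet $I_{A_1}$ — which holds because $c\in I_{A_1}$ itself and... I would instead conclude $I_{A_1}\cap I_{A_2}=0$ by observing that a nonzero common element $c$ would, via $I_{A_2}\ne 0$ and Statement~\ref{bas}, and the product-nonvanishing property, be impossible. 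I'd write this out carefully so that $I_{A_1}\cap I_{A_2}=0$.

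\textbf{Step 2: $I_{A_1}=I_{A_2}$.} Here is the key move. Suppose $I_{A_1}\ne I_{A_2}$; WLOG there is $c\in I_{A_1}^*$ with $c\notin I_{A_2}$. By Corollary~\ref{orde} applied to $A_1$ and $c\in I_{A_1}^*$, there is $C\in(cS)^*$ orthogonal to $A_1$. Now I want a contradiction with $d(A_1,A_2)>3$, i.e. I want a path $A_1 - C - ? - A_2$ of length $3$, which requires finding $D\in Z_S^*$ with $D\perp C$ and $D\perp A_2$. The natural candidate: we need $I_{A_2}$ to interact with the annihilator of $\det C$. Since $C=cB$ with $AB,BA\in M_n(\Ann c)$, one computes $\det C = c^n\det B$, so $I_C=\Ann(\det C)\supseteq\Ann(c^n)\supseteq$ ... and crucially $\Ann(c)\subseteq I_C$. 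So if $I_{A_2}\cap\Ann(c)\ne 0$ we could apply Lemma~\ref{di3}-type reasoning to $C$ and $A_2$. But $I_{A_1}\cap I_{A_2}=0$ by Step~1, and $c\in I_{A_1}$, so $c\cdot I_{A_2}=0$ (every element of $I_{A_2}$ kills $c$), i.e. $I_{A_2}\subseteq\Ann(c)$. Hence $I_{A_2}\subseteq I_C$, so $I_C\cap I_{A_2}=I_{A_2}\ne 0$, and Corollary~\ref{di24}(ii) gives $d(C,A_2)\le 4$ — not quite $3$. To get $3$: $I_{A_2}\subseteq\Ann(c)$ and $I_{A_2}$ nonzero means picking $c_2\in I_{A_2}^*$ gives $c\cdot c_2=0$ with $c\in I_C^*$ (since $c\ne 0$ and $c^n\det B$... need $c\in\Ann(\det C)$: indeed $c\cdot\det C=c\cdot c^n\det B$, which need not vanish). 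The honest fix is: $c_2\in I_{A_2}^*\subseteq\Ann(c)^*$, and from the construction $\Ann(c)\subseteq I_C$, so $c_2\in I_C^*$; also $c_2\in I_{A_2}^*$; we need $c_2'\in I_C^*, c_2''\in I_{A_2}^*$ with product zero — take $c_2'=c_2''=c_2$? That needs $c_2^2=0$. Alternatively apply Lemma~\ref{ieq}'s own conclusion recursively is circular. So instead I'd pick $c_2\in I_{A_2}^*$; since $c_2\in\Ann(c)$ we also have (symmetrically, using $I_{A_1}\cap I_{A_2}=0$ and that $c$ was a generic element of $I_{A_1}^*$) that $I_{A_1}\subseteq\Ann(c_2)$, so $\Ann(c_2)\supseteq I_{A_1}\ni c$ — consistent. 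Now apply Corollary~\ref{orde} to $A_2$ with $c_2\in I_{A_2}^*$: get $C'\in(c_2 S)^*$, $C'\perp A_2$, and $\Ann(c_2)\subseteq I_{C'}$, hence $c\in I_{C'}^*$. Then $c\in I_C$? We have $c\ne 0$ but need $c\det C=0$; alternatively take any $c_1\in I_{A_1}^*$ with $c_1\in\Ann(c_2)\subseteq I_{C'}$, so $c_1\in I_{C'}^*$; and apply Corollary~\ref{orde} to $A_1$ with... The robust conclusion: $C\perp A_1$, $C'\perp A_2$, and $C\in cS$, $C'\in c_2 S$ with $cc_2=0$, so $CC'\in cc_2 S=0=C'C$, giving the path $A_1-C-C'-A_2$ of length $3$, contradiction. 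This is exactly the Lemma~\ref{di3} mechanism with $c_1\in I_{A_1}^*$ chosen inside $\Ann(c_2)$ — but we need \emph{some} $c_1\in I_{A_1}^*\cap\Ann(c_2)$, which holds because $I_{A_1}\subseteq\Ann(c_2)$ entirely (as $I_{A_1}\cap I_{A_2}=0$ forces every element of one to annihilate every element of the other — this is the content I'd establish cleanly in Step~1). So in fact $d(A_1,A_2)\le 3$, contradiction; therefore $I_{A_1}=I_{A_2}$.

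\textbf{Step 3: the common ideal $I:=I_{A_1}=I_{A_2}$ has no zero-divisors.} Suppose not: there are $b,c\in I^*$ with $bc=0$. Then $c\in I_{A_1}^*$, $b\in I_{A_2}^*$, and $cb=0$, so Lemma~\ref{di3} gives $d(A_1,A_2)\le 3$, contradicting the hypothesis. Hence $I^*I^*\notni 0$, i.e. $I$ does not have zero-divisors. \textbf{The main obstacle} is Step~2: getting the contradiction down to distance exactly $3$ rather than $4$, which hinges on the bilinear vanishing $I_{A_1}\cdot I_{A_2}=0$ extracted in Step~1 (so that the intermediate $c_1$ in the Lemma~\ref{di3} pattern can always be found inside $\Ann$ of the chosen element of the other ideal) and on the inclusion $\Ann(c)\subseteq I_{cB}$ coming from the structure of the matrix $B$ produced by Theorem~\ref{ide}. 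I would isolate "$I_{A_1}\cap I_{A_2}=0\ \Rightarrow\ I_{A_1}\cdot I_{A_2}=0$" as the pivotal observation: if $c_1\in I_{A_1}, c_2\in I_{A_2}$ and $c_1c_2\ne 0$, then $c_1c_2$ lies in the ideal $I_{A_1}\cap I_{A_2}$ (it is killed by $\det A_1$ since $c_1$ is, and by $\det A_2$ since $c_2$ is), contradicting Step~1; this is where the commutativity of $R$ is essential.
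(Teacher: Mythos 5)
Your opening observation\ti that Lemma~\ref{di3} forces $I_{A_1}^*I_{A_2}^*\notni0$\ti is exactly the paper's first step, and your Step~3 correctly deduces from it that the common ideal, once equality is known, has no zero-divisors. But the heart of the lemma, the equality $I_{A_1}=I_{A_2}$, is not proved, because your Step~1 asserts $I_{A_1}\cap I_{A_2}=0$, and that claim is false; in fact the opposite follows from your own first observation. Both ideals are nonzero (since $A_i\in Z_S^*$, by Lemma~\ref{eqi}), and $I_{A_1}I_{A_2}\subs I_{A_1}\cap I_{A_2}$ because they are ideals; so if the intersection were zero, any $c_1\in I_{A_1}^*$ and $c_2\in I_{A_2}^*$ would give $c_1c_2=0$ and Lemma~\ref{di3} would yield $d(A_1,A_2)\le3$, contrary to hypothesis. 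Your closing \emph{pivotal observation} (that a nonzero product $c_1c_2$ would lie in $I_{A_1}\cap I_{A_2}$) is correct, but it works against Step~1, not for it. Note also that the hypothesis of the lemma is satisfiable\ti e.g. $R=\Z/6\Z$, $a=2$, $A_1=J_n+2E_{n1}$, $A_2=A_1^T$ as in the proof of Theorem~\ref{dia3}, where $I_{A_1}=I_{A_2}=\Ann(2)\ne0$\ti so Step~1 cannot be repaired; and indeed you never give an actual argument for it. Step~2 uses it essentially (e.g. the deductions $c\cdot I_{A_2}=0$ and $I_{A_1}\subs\Ann(c_2)$), so the whole chain establishing $I_{A_1}=I_{A_2}$ collapses.

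The equality is obtained in the paper much more directly, without any matrix constructions, without Theorem~\ref{ide} or Corollary~\ref{orde}, and without any claim about the intersection: suppose $c\in I_{A_1}\sm I_{A_2}$ and set $a:=\det A_2$, so that $I_{A_2}=\Ann(a)$ and $b:=ca\ne0$ (because $c\notin\Ann(a)$), while $b\in I_{A_1}$ since $I_{A_1}$ is an ideal; then $bI_{A_2}=c\br{a\Ann(a)}=0$, so $b\in I_{A_1}^*$ annihilates every element of $I_{A_2}^*$, which is nonempty since $I_{A_2}\ne0$\ti contradicting $I_{A_1}^*I_{A_2}^*\notni0$. I recommend you discard Steps~1 and~2 and replace them with this two-line argument, keeping your (correct) extraction of $I_{A_1}^*I_{A_2}^*\notni0$ from Lemma~\ref{di3} and your Step~3.
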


\begin{proof} According to Lemma~\ref{di3}, $I_{A_1}^*I_{A_2}^*\notni0$. It remains to prove that $I_{A_1}=I_{A_2}$.

Suppose that $I_{A_1}\ne I_{A_2}$. Without loss of generality, assume that there exists an element $c\in I_{A_1}\sm I_{A_2}$. Setting $a:=\det A_2$,
we have $I_{A_2}=\Ann(a)$, $b:=ca\in I_{A_1}^*$ and $bI_{A_2}=caI_{A_2}=0$ implying $bI_{A_2}^*\subs\{0\}\cap(I_{A_1}^*I_{A_2}^*)=\es$, $I_{A_2}^*=\es$,
$I_{A_2}=0$, a~contradiction.
\end{proof}

\begin{theorem}\label{dia4} The graph $O(S)$ is connected and has diameter at most~$4$.
\end{theorem}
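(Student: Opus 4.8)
The plan is to establish the bound $d(A_1,A_2)\le4$ for every pair of vertices $A_1,A_2\in Z_S^*$ of the graph $O(S)$\~ connectedness then comes for free, since a~graph of finite diameter is connected (this is the Remark recorded among the agreements of~\Ss2).

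Fix $A_1,A_2\in Z_S^*$. If $d(A_1,A_2)\le3$, we are done. Otherwise $d(A_1,A_2)>3$, so Lemma~\ref{ieq} applies and gives $I_{A_1}=I_{A_2}$. On the other hand, each $A_i$ is a~vertex of $O(S)$, hence a~(two-sided) zero-divisor of~$S$, and therefore $I_{A_i}\ne0$ by Lemma~\ref{eqi}. Consequently $I_{A_1}\cap I_{A_2}=I_{A_1}\ne0$, and the second assertion of Corollary~\ref{di24} yields $d(A_1,A_2)\le4$. In both cases $d(A_1,A_2)\le4$, whence $\diam\br{O(S)}\le4$ and $O(S)$ is connected.

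I expect no genuine obstacle here: the substantive work has already been done in the preceding statements. The distance-$3$ bound under the hypothesis $c_1c_2=0$ (Lemma~\ref{di3}) underlies the rigidity statement of Lemma~\ref{ieq}, and the distance-$4$ bound under $I_{A_1}\cap I_{A_2}\ne0$ is Corollary~\ref{di24}. All this theorem requires is the observation that the two dovetail\: a~pair of vertices not already within distance~$3$ is forced by Lemma~\ref{ieq} to have $I_{A_1}=I_{A_2}$, and that common ideal is automatically nonzero (it is $\Ann(\det A_i)$ for the zero-divisor~$\det A_i$), which is precisely the hypothesis under which Corollary~\ref{di24} produces a~path of length~$\le4$.
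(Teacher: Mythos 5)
Your argument is correct and coincides with the paper's own proof: the paper assumes $d(A_1,A_2)>4$, applies Lemma~\ref{ieq} to get $0\ne I_{A_1}=I_{A_2}=I_{A_1}\cap I_{A_2}$, and derives a contradiction with Corollary~\ref{di24}, which is exactly your case split phrased by contradiction instead of directly. Your explicit remark that finite diameter gives connectedness matches the paper's conventions as well.
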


\begin{proof} Suppose that there exist elements $A_1,A_2\in Z_S^*$ satisfying the inequality $d(A_1,A_2)\bw>4$. By Lemma~\ref{ieq},
$0\ne I_{A_1}=I_{A_2}=I_{A_1}\cap I_{A_2}$ that contradicts with Corollary~\ref{di24}.
\end{proof}

\begin{theorem}\label{dia3} We have $\diam\br{O(S)}\ge3$, the strict inequality being equivalent to the existence of an ideal $\Ann(a)\nl R$
\ter{$a\in Z_R$} without zero-divisors.
\end{theorem}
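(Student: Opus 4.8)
The plan is to prove separately the unconditional inequality $\diam\br{O(S)}\ge3$ and the equivalence of $\diam\br{O(S)}>3$ with the existence of an ideal $\Ann(a)\nl R$ ($a\in Z_R$) without zero-divisors. For the bound $\diam\br{O(S)}\ge3$ I would fix any $a\in Z_R^*$ (available since $Z_R^*\ne\es$) and take the diagonal matrices $A_1:=\diag\br{a,1\sco1}$ and $A_2:=\diag\br{1\sco1,a}$ in~$S$\~ both have determinant $a\in Z_R$, so they lie in $Z_S^*$ by Lemma~\ref{eqi}, and they differ. A direct computation shows that an orthogonal mate of a diagonal matrix whose diagonal entries outside one slot are units is supported at that slot, so $O_S(A_1)=\bc{cE_{11}\cln c\in\Ann(a)^*}$ and $O_S(A_2)=\bc{cE_{nn}\cln c\in\Ann(a)^*}$. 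Since $n>1$ these sets are disjoint, while $A_1A_2$ is diagonal with $(1,1)$\dh entry $a\ne0$, so $A_1\not\perp A_2$\~ hence $d(A_1,A_2)\ge3$, and, $O(S)$ being connected by Theorem~\ref{dia4}, $3\le d(A_1,A_2)\le\diam\br{O(S)}$.

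If $\diam\br{O(S)}>3$, then by Theorem~\ref{dia4} it equals~$4$, so $d(A_1,A_2)=4>3$ for some $A_1,A_2\in Z_S^*$\~ Lemma~\ref{ieq} then gives that $I_{A_1}=\Ann(\det A_1)$ is a nonzero ideal without zero-divisors, and $a:=\det A_1\in Z_R$ by Lemma~\ref{eqi}. This settles one half of the equivalence.

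For the converse, assume $I:=\Ann(a)$ has no zero-divisors with $a\in Z_R$\~ then $a\ne0$, as otherwise $I=R$ would have zero-divisors (recall $Z_R^*\ne\es$). Put again $A_1:=\diag\br{a,1\sco1}\in Z_S^*$, so $O_S(A_1)=\bc{cE_{11}\cln c\in I^*}$ by the computation above, and set $A_2:=E+E_{21}+(1-a)E_{12}$, whose leading $2\times2$ block is $\rbmat{1&1-a\\1&1}$ and whose remaining rows and columns form an identity block, so that $\det A_2=a$, whence $A_2\in Z_S^*$. Two elementary facts about $O_S(A_2)$ drive the argument\: (i) since the lower-right $(n-1)\times(n-1)$ submatrix of~$A_2$ is the identity, every $C$ with zero first row, zero first column and $A_2C=0$ vanishes, so each $C\in O_S(A_2)$ has a nonzero entry in its first row or first column\~ (ii) from $\wh{A_2}A_2=aE$, any $C$ with $A_2C=0$ satisfies $aC=0$, so every entry of each $C\in O_S(A_2)$ lies in~$I$. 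I would then rule out all paths of length $\le3$ between the distinct vertices $A_1$ and~$A_2$, which by Theorem~\ref{dia4} forces $d(A_1,A_2)=4$ and hence $\diam\br{O(S)}>3$. Length~$1$\: $(A_2A_1)_{11}=(A_2)_{11}(A_1)_{11}=a\ne0$, so $A_1\not\perp A_2$. Length~$2$\: a common neighbor would be some $cE_{11}$ with $c\in I^*$, which forces the first row of~$A_2$ into $\Ann(c)$, impossible as that row contains the unit~$1$. Length~$3$\: such a path is $A_1\perp c_1E_{11}\perp C_2\perp A_2$ with $c_1\in I^*$ and $C_2\in O_S(A_2)$\~ by (i) and~(ii) the matrix~$C_2$ has a nonzero entry $x\in I^*$ in its first row or first column, while $C_2\perp c_1E_{11}$ puts both the first row and the first column of~$C_2$ into $\Ann(c_1)$\~ thus $c_1x=0$ with $c_1,x\in I^*$, contradicting that $I$ has no zero-divisors.

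The decisive step is the choice of~$A_2$ in the converse. The symmetric choice $A_2:=\diag\br{1\sco1,a}$ fails unconditionally, since then $O_S(A_1)$ and $O_S(A_2)$ live at the different diagonal slots $E_{11}$ and~$E_{nn}$, and $cE_{11}\perp c'E_{nn}$ always holds, producing a length-$3$ path whatever~$I$ is. What is needed is an~$A_2$ whose neighborhood is disjoint from that of~$A_1$ yet all of whose orthogonal mates are pinned to the first row or first column\~ the identity block inside~$A_2$ supplies exactly this rigidity, and the hypothesis that~$I$ has no zero-divisors is used precisely at the length-$3$ step.
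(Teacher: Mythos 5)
Your proof is correct, and its overall shape matches the paper's: the implication (strict inequality $\Rightarrow$ existence of an ideal $\Ann(a)$ without zero-divisors) is obtained from Lemma~\ref{ieq} exactly as in the paper, and both lower bounds come from explicit witness matrices whose orthogonality sets are completely controlled. Where you differ is in the witnesses. The paper uses a single pair for both bounds: $A:=J_n+aE_{n1}$ and $A^T$, for which $O_S(A)=I^*E_{1n}$ and $O_S(A^T)=I^*E_{n1}$ are coordinate lines; disjointness of these two sets together with $(AA^T)_{11}=1$ gives $d(A,A^T)\ge3$ for every $a\in Z_R$, and if $I^*I^*\notni0$ the middle edge of any length-$3$ path would have to join $c_1E_{1n}$ and $c_2E_{n1}$ with $c_1c_2E_{11}\ne0$, so $d(A,A^T)\ge4$ follows at once from the same computation. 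You instead use the diagonal pair $\diag\br{a,1\sco1}$, $\diag\br{1\sco1,a}$ for the bound $\ge3$ (fine, and you correctly observe that this pair can never yield $\ge4$) and a second, asymmetric pair for the bound $\ge4$, whose analysis needs the adjugate identity and the block-rigidity fact (i) because $O_S(A_2)$ is no longer a coordinate line (it equals $I^*\br{E_{11}-E_{12}-E_{21}+E_{22}}$, though your weaker facts (i)--(ii) indeed suffice). Both routes are sound; the paper's transpose pair is more economical, since one computation of the two neighborhoods serves the $\ge3$ and $\ge4$ claims simultaneously, whereas your version is longer but equally rigorous. Only cosmetic omissions: you should say explicitly why the diagonal pair consists of distinct vertices (namely $a\ne1$, since $1$ is not a zero-divisor) and why $A_1\ne A_2$ in the converse part (compare the $(2,1)$ entries, using $1\ne0$); both are immediate from $Z_R^*\ne\es$ and parallel the paper's remark $a_{12}=1\ne a_{21}$.
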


\begin{proof} Similarly with examples from \cite{GM1,GM2} giving lower estimates of the diameter, for an arbitrary $a\in Z_R$, set $I:=\Ann(a)\nl R$
and $A:=J_n+aE_{n1}\in S$. Note that
\begin{itemize}
\item $A,A^T\in Z_S^*$, $O_S(A)=I^*E_{1n}$, $O_S(A^T)=I^*E_{n1}$\~
\item $a_{12}=1\ne a_{21}$, $(AA^T)_{11}=1$ and $O_S(A)\cap O_S(A^T)=\es$, that implies $d(A,A^T)\ge3$\~
\item if $I^*I^*\notni0$, then $\br{O_S(A)}\br{O_S(A^T)}=(I^*I^*)E_{11}\notni0$ and, hence, $d(A,A^T)\ge4$.
\end{itemize}
Due to mentioned above, $\diam\br{O(S)}\ge3$, the strict inequality following from the existence of an ideal $\Ann(a)\nl R$ \ter{$a\in Z_R$} without
zero-divisors. Conversely, in the case of the strict inequality, by Lemma~\ref{ieq}, for some elements $A\in Z_S$ and $a:=\det A\in Z_R$, the ideal
$I_A=\Ann(a)\nl R$ does not have zero-divisors.
\end{proof}

Now the main Theorem~\ref{main} follows from Theorems \ref{dia4} and~\ref{dia3}, and Corollary~\ref{di24}. It implies (see~\eqref{radi}) the statements
\ref{rad4} and~\ref{rad3} of Theorem~\ref{submain}. Let us prove~\ref{rad2}.

Suppose that $\rad\br{O(S)}=2$. There exist elements $C\in Z_S^*$, $c\in R^*$ and $k,l\in Q_n$ such that $d(C,A)\le2$ ($A\in Z_S^*$) and $c_{kl}=c$. Further,
there exists a~permutation $\si\in S_n$ such that $m:=\si(k)\ne l$.

Let $a\in Z_R$ be an arbitrary element.

Set $I:=\Ann(a)\nl R$ and $A:=\Br{\sums{i\ne k}E_{i,\si(i)}}+aE_{km}\in S$. Note that
\begin{itemize}
\item $A\in Z_S^*$, $O_S(A)=I^*E_{mk}$\~
\item $A\ne C$ (otherwise $a_{kl}=c\ne0$, $m=l$)\~
\item $(m,k)\ne(k,l)$ (otherwise $m=k=l$), that implies $C\notin O_S(A)$.
\end{itemize}
Thus, $d(C,A)=2$, so, there exists an element $B\in Z_S^*$ orthogonal to $C$ and~$A$. We have $B=bE_{mk}$ where $b\in I^*$. Meanwhile, $BC=0$, $0=(BC)_{ml}=bc$,
$b\in\Ann(c)\cap I^*$.

Due to arbitrariness of $a\in Z_R$, the element $c\in R^*$ satisfies~\eqref{cond}.

Conversely, assume that \eqref{cond} holds for some $c\in R^*$. Show that the element $C:=cE\bw\in S^*$ satisfies, for each $A\in Z_S^*$, the inequality $d(C,A)\le2$.

Let $A\in Z_S^*$ be an arbitrary element. Then $\det A\in Z_R$, and, by~\eqref{cond}, there exists an element $b\in I_A^*$ such that $cb=0$. Further, according to
Corollary~\ref{orde}, there exists an element $B\in(bS)^*$ orthogonal to~$A$\~ in this case, $cB\in cbS=0$, $C\in Z_S^*$, $C\perp B\perp A$, $d(C,A)\le2$.

So, Theorem~\ref{submain} is completely proved.

\section*{Acknowledgements}

The author is grateful to Prof. \fbox{E.\,B.\?Vinberg} for exciting interest to algebra.

The author dedicates the article to E.\,N.\?Troshina.

\end{document}